\newcommand{\F}{\mathcal{F}}
\renewcommand{\H}{\mathcal{H}} 	
\renewcommand{\O}{\mathcal{O}}	
\newcommand{\Fg}{\mathfrak{F}}
\newcommand{\Sg}{\mathfrak{S}}
\newcommand{\Tg}{\mathfrak{T}}
\newcommand{\CC}{{\mathbb{C}}}
\newcommand{\RR}{{\mathbb{R}}} 
\newcommand{\As}{{\mathscr{A}}}\newcommand{\Bs}{{\mathscr{B}}}\newcommand{\Cs}{{\mathscr{C}}}
\newcommand{\Es}{{\mathscr{E}}}\newcommand{\Fs}{{\mathscr{F}}}\newcommand{\Gs}{{\mathscr{G}}}
\newcommand{\Ms}{{\mathscr{M}}}\newcommand{\Ns}{{\mathscr{N}}}\newcommand{\Os}{{\mathscr{O}}}
\newcommand{\Rs}{{\mathscr{R}}}
\newcommand{\Xs}{{\mathscr{X}}}
\newcommand{\Ys}{{\mathscr{Y}}}\newcommand{\Zs}{{\mathscr{Z}}}
\DeclareFontFamily{U}{rsfs}{\skewchar\font127 }
\DeclareFontShape{U}{rsfs}{m}{n}{%
   <5> <6> rsfs5
   <7> rsfs7
   <8> <9> <10> <10.95> <12> <14.4> <17.28> <20.74> <24.88> rsfs10
}{}
\DeclareSymbolFont{rsfs}{U}{rsfs}{m}{n} 
\DeclareSymbolFontAlphabet{\scr}{rsfs}
\newcommand{\Af}{\scr{A}}\newcommand{\Ef}{\scr{E}}
\newcommand{\Ff}{\scr{F}}\newcommand{\Hf}{\scr{H}}
\newcommand{\Kf}{\scr{K}}\newcommand{\Lf}{\scr{L}}\newcommand{\Mf}{\scr{M}}\newcommand{\Of}{\scr{O}} 
\newcommand{\Pf}{\scr{P}}\newcommand{\Rf}{\scr{R}}\newcommand{\Sf}{\scr{S}}\newcommand{\Tf}{\scr{T}}
\newcommand{\Vf}{\scr{V}}
\DeclareMathOperator{\Ob}{Ob} 
\DeclareMathOperator{\Hom}{Hom}
\renewcommand{\emph}{\textbf} 										
\newcommand{\cs}{C*}
\newcommand{\hlink}[2]{\href{#1}{\texttt{#2}}} 
\newtheorem{theorem}{Theorem}[section]							
\newtheorem{definition}[theorem]{Definition} 
\newtheorem{remark}[theorem]{Remark}
\newtheorem{example}[theorem]{Example}
\numberwithin{equation}{section}  		
\title{\textbf{Enriched Fell Bundles and Spaceoids}}
\author{\normalsize Paolo Bertozzini$^a$, Roberto Conti$^b$, Wicharn Lewkeeratiyutkul$^c$ 
\\
\normalsize 
$^a$\textit{Department of Mathematics and Statistics, Faculty of Science and Technology,} 
\\
\normalsize 
\textit{Thammasat University, Pathumthani 12121, Thailand,} 
\texttt{paolo.th@gmail.com} 
\\
\normalsize 
$^b$\textit{Dipartimento di Scienze, Universit\`a di Chieti-Pescara ``G. D'Annunzio'',} 
\\
\normalsize \textit{Viale Pindaro 42, I-65127 Pescara, Italy,} 
\texttt{conti@sci.unich.it}  
\\
\normalsize 
$^c$\textit{Department of Mathematics and Computer Science, Faculty of Science,} 
\\ 
\normalsize 
\textit{Chulalongkorn University, Bangkok 10330, Thailand,} 
\texttt{Wicharn.L@chula.ac.th}
}
\date{\normalsize{30 November 2009, revised: 
26 November 2011}}
\begin{document}

\maketitle

\begin{abstract} \noindent  
We propose a definition of involutive categorical bundle (Fell bundle) enriched in an involutive monoidal category and we
argue that such a structure is a possible suitable environment for the formalization of different equivalent versions of spectral data for commutative C*-categories. 

\medskip 
 
\noindent 
\textbf{MSC-2010:} 
					46L87,			
					46M15, 			
					46L08,			
					46M20, 			
					16D90,			
					18F99. 			

\medskip
 
\noindent
\textbf{Keywords:} Involutive Monoidal Category, Enriched Category, Fell Bundle, Spaceoid, 
\\ 
Non-commutative Geometry. 
\end{abstract}

\section{Introduction}

In a previous work~\cite{BCL4} we obtained a horizontal categorification\footnote{A terminology that we introduced 
in~\cite[Section~4.2]{BCL0}.} of Gel'fand duality theorem for commutative unital 
C*-algebras that provides a spectral theory for commutative full \hbox{C*-categories} in terms of certain Hermitian line bundes (over the product of a compact Hausdorff space with a connected equivalence relation) naturally equipped with a Fell bundle structure, that we called ``spaceoids''.  

Modulo a suitable definition of ``Fell bundle enriched in a monoidal $*$-category'' (or more generally an involutive bicategory), there are actually three completely equivalent ways to see the spectrum of a commutative full \hbox{C*-category}:   
\begin{itemize}
\item[a)] 
as a ``continuous field of one dimensional C*-categories'' (i.e.~a Fell bundle, over the ``diagonal equivalence relation'' 
$\Delta_X$ of a compact Hausdorff space $X$, ``enriched'' in the \hbox{$*$-bicategory} 
of ``Hilbert bimodules'' of one-dimensional 
C*-categories with a fixed set of objects $\Os$ (with ``monoidal'' product given by the tensor product of such bimodules and ``monoidal'' involution given by the Rieffel dual of the bimodules), 
\item[b)] 
a Fell bundle, over the maximal equivalence relation $\Rs_\Os=\Os\times\Os$ of a discrete space $\Os$, ``enriched'' in the monoidal 
$*$-category of Hermitian line bundles over a fixed compact Hausdorff space $X$ (with monoidal product given by the fiberwise tensor product and monoidal  involution given by the fiberwise dual), 
\item[c)]
as a compact topological spaceoid i.e.~a rank-one unital Fell bundle over the product equivalence relation $\Delta_X\times\Rs_\O$ of a compact Hausdorff space $X$ and a discrete space $\Os$, as described in our previous paper. 
\end{itemize} 

Each of the above pictures has its own advantages: ``equivalence relation of Hermitan line-bundles'' being the easiest picture of the spectrum just as a ``strictification'' of an equivalence relation in the geometric Picard-Rieffel group of $X$; 
``continuous field of one-dimensional \hbox{C*-categories}'' being the most natural in terms of the construction of the spectrum; the description via ``spaceoids'' being the most convenient in terms of Fell bundles (since, putting on the same foot discrete and continuous variables, it does not require any ``iteration'' of the Fell bundle construction) and the one that we deem more ``appropriate'' for further generalizations. 

It is the purpose of our present work to propose the suitable definitions of ``Fell bundles enriched in a monoidal involutive category'' and to prove rigorously the previous equivalences between different descriptions of spectral data for commutative full C*-categories. 

\medskip 

The content of the paper is as follows. 

In section~\ref{sec: preliminaries} we recall the basic definitions of \hbox{C*-categories} and Fell bundles and we introduce 
a definition of spaceoid that 
is suitable for 
the statement of our Ge'fand duality theory for small commutative full C*-categories.  
The main part of the paper is contained in section~\ref{sec: monoidal} where we propose a ``bundle version'' of M.~Kelly enriched categories and we further consider categorical bundles over a $*$-category that are enriched in a monoidal $*$-category (or a \hbox{$*$-bicategory}). 
In section~\ref{sec: equivalence} we finally show how categorical bundles enriched in monoidal \hbox{$*$-categories} (\hbox{$*$-bicategories}) can be used to formalize different but perfectly equivalent versions of the spectral data for a commutative full C*-category. 
We conclude with a short outlook (section~\ref{sec: outlook}) on possible future developments and applications. 

\section{Preliminaries on Fell Bundles and Spaceoids}\label{sec: preliminaries}

For the convenience of the reader, we provide here the basic definitions of C*-categories, Fell bundles and spaceoids and recall our Gel'fand duality result between small commutative full C*-categories and compact Hausdorff spaceoids. 

\begin{definition}
A \emph{$*$-category} (also called \emph{dagger category} or \emph{involutive category})\footnote{The definition from M.~Burgin~\cite{Bu}, essentially implicit in~\cite{GLR}, has been (re-)introduced and used by several authors, see 
e.g.~J.~Lambek~\cite{La}, P.~Mitchener~\cite{M1}, S.~Abramski-B.~Coecke~\cite{AC}, P.~Selinger~\cite{S}.} is a category 
$\Cs$ with contravariant functor $*:\Cs\to\Cs$ acting identically on the objects that is involutive i.e.~$(x^*)^*=x$ for all 
$x\in\Hom_ \Cs$. 
A $*$-category is an \emph{inverse $*$-category} if $xx^*x=x$ for all $x\in \Hom_\Cs$. Whenever $\Hom_\Cs$ is equipped with a topology, we require the compositions and involutions to be continuous. 
\end{definition}
For a ($*$-)category $\Cs$ we will denote by $\Cs^o$ the set of identities $\iota_A$, for $A\in \Ob_\Cs$ and by $\Cs^n$ the set of 
$n$-tuples $(x_1,\dots,x_n)$ of composable arrows $x_1,\dots,x_n\in \Hom_\Cs$.

\begin{definition} 
A \emph{C*-category}\footnote{First introduced in P.~Ghez-R.~Lima-J.~Roberts~\cite{GLR} and further developed in P.~Mitchener~\cite{M1}.} is a $*$-category $\Cs$ such that: the sets $\Cs_{AB}:=\Hom_\Cs(B,A)$ are complex Banach spaces; the compositions are bilinear maps such that $\|xy\|\leq\|x\|\cdot \|y\|$, for all $x\in \Cs_{AB}$, $y\in \Cs_{BC}$; the involutions are conjugate linear maps 
such that $\|x^*x\|=\|x\|^2$, $\forall x\in \Cs_{BA}$ and such that $x^*x$ is a positive element in the C*-algebra $\Cs_{AA}$, for every $x\in \Cs_{BA}$ (i.e.~$x^* x = y^* y$ for some $y \in \Cs_{AA}$). A C*-category is \emph{commutative} is for all 
$A\in \Ob_\Cs$, the C*-algebras $\Cs_{AA}$ are commutative. 
\end{definition}

Following J.~Fell-R.~Doran~\cite[Section~I.13]{FD} or N.~Waever~\cite[Chapter~9.1]{W} we have the following definition of Banach 
bundle\footnote{Note that our norms are supposed to be continuous.}. 
\begin{definition}\label{def: banach} 
A \emph{Banach bundle} $(\Es,\pi,\Xs)$ is a surjective continuous open map $\pi:\Es\to\Xs$ such that for all $x\in \Xs$ the fiber 
$\Es_x:=\pi^{-1}(x)$ is a complex Banach space and satisfying the following additional conditions:
\begin{itemize}
\item
the operation of additon $+:\Es\times_\Xs\Es\to\Es$ is continuous on the set \\ 
$\Es\times_\Xs\Es:=\{(x,y)\in \Es\times\Es \ | \ \pi(x)=\pi(y)\}$, 
\item
the operation of multiplication by scalars $\cdot:\CC\times\Es\to \Es$ is continuous,  
\item
the norm $\|\cdot\|:\Es\to\RR$ is continuous, 
\item
for all $x_o\in \Xs$, the family $U_{x_o}^{\Os,\epsilon}=\{e\in \Es \ | \ \|e\|<\epsilon, \pi(e)\in \Os\}$ where $\Os\subset \Xs$ is an open set containing $x_o\in\Xs$ and $\epsilon>0$, is a fundamental system of neighbourhood of $0\in E_{x_o}$. 
\end{itemize}
For a \emph{Hilbert bundle} we require that for all $x\in \Xs$, fiber $\Es_x$ is a Hilbert space. 
\end{definition}

\begin{definition}\label{def: fell-bundle} 
A \emph{Fell bundle}\footnote{Fell bundles over topological groups were first introduced by J.~Fell~\cite[Section~II.16]{FD} and later generalized to the case of groupoids by S.~Yamagami (see A.~Kumjian~\cite{K} and references therein) and to the case of inverse semigroups by N.~Sieben (see R.~Exel~\cite[Section~2]{E}).} $(\Es,\pi,\Xs)$ over an inverse $*$-category $\Xs$ is a Banach bundle that is also a $*$-category $\Es$ fibered over the $*$-category $\Xs$ with continuous fiberwise bilinear compositions and fiberwise conjugate-linear involutions such that \hbox{$\|ef\|\leq \|e\|\cdot\|f\|$} for all composable $e,f\in \Es$, $\|e^*e\|=\|e\|^2$ for all $e\in \Es$ and $e^*e$ is a positive element in the \hbox{C*-algebra} 
$\Es_{\pi(e^*e)}:=\{f\ | \ \pi(f)=\pi(e^*e)\}$. 

By a \emph{morphism of Fell bundles} $(\Es_j,\pi_j,\Xs_j)$, for $j=1,2$, we mean a pair $(\phi,\Phi)$ such that $\phi:\Xs_1\to\Xs_2$ is a $*$-isomorphism of $*$-categories and $\Phi:\Es_1\to\Es_2$ is a continuous fiberwise linear $*$-functor such that $\phi\circ\pi_1=\pi_2\circ\Phi$. 
\end{definition}

\begin{remark}\label{rem: banach}
For essential future use in section~\ref{sec: monoidal}, note that the axioms defining a Banach, Hilbert or Fell bundle can be imposed also to situations in which the fibers $\Es_x$ are themselves disjoint union of Banach/Hilbert spaces and in particular when, for all 
$x\in \Xs$, $\Es_x$ is actually the total space of another Banach/Hilbert/Fell bundle over another space $\Ys$. 
\end{remark}

\begin{remark} 
In a C*-category $\Cs$, the set $\Ob_\Cs\times\Ob_\Cs$ is a groupoid (actually a maximal equivalence relation) and so small C*-categories can be identified as Fell bundles over such an equivalence relation. 

Note also that in a Fell bundle $(\Es,\pi,\Xs)$, whenever $x^*=x^*x=x\in \Xs$ is an Hermitian idempotent, the fibers $\Es_x:=\pi^{-1}(x)$ are 
C*-algebras and that for all $x\in \Xs$, the fiber $\Es_x$ is a Hilbert C*-bimodule over the C*-algebras $\Es_{xx^*}$ on the left and 
$\Es_{x^*x}$ on the right.  
\end{remark}
\begin{definition}
A Fell bundle is \emph{unital} if, for all Hermitian idempotents $x\in \Xs$, the \hbox{C*-algebras} $\Es_x$ are unital. 
A Fell bundle is \emph{commutative} if the category $\Es$ is commutative (equivalently, if the monoid $\pi^{-1}(\Xs_{AA})$ is  commutative for all 
$A\in \Ob_\Xs$).\footnote{This implies that the inverse $*$-category $\Xs$ is commutative (i.e.~$\Xs_{AA}$ is a commutative monoid for all 
\hbox{$A\in \Ob_\Xs$)} and for all Hermitian idempotents $x\in \Xs$ the C*-algebra $\Es_x$ is commutative.} 

A Fell bundle is \emph{saturated} if all the Hilbert C*-bimodules $\Es_x$ are full. 
A small C*-category is a \emph{full C*-category} if it is saturated as a Fell bundle. A small C*-category is unital (respectively commutative) whenever it is unital (respectively commutative) as a Fell bundle. 
\end{definition}

In following we will restrict our attention only to the simple case of unital saturated commutative Fell bundles whose base category 
$\Xs$ is actually an equivalence relation. 
Clearly the category $\Af$ of object-bijective $*$-functors between small commutative full \hbox{C*-categories} considered 
in~\cite{BCL4} is a full subcategory of the category $\Ef$ of morphisms of commutative saturated unital Fell bundles.  

\begin{definition}
A \emph{(compact Hausdorff) topological spaceoid for a commutative full C*-category} (or simply a spaceoid, for short) $(\Es,\pi,\Xs)$ is a unital rank-one Fell bundle over the product involutive topological category $\Xs:=\Delta_X\times\Gs_\Os$ where 
$\Delta_X:=\{(p,p) \ | \ p\in X\}$ is the minimal equivalence relation of a compact Hausdorff space $X$ and $\Gs_\Os:=\Os\times\Os$ is a the total equivalence relation on a discrete space $\Os$.
A \emph{morphism of spaceoids} $(\Es_j,\pi_j,\Xs_j)$, for $j=1,2$, is given by a pair $(f,\F)$, where $f=(f_\Delta,f_\Gs)$ with 
$f_\Delta:\Delta_{X_1}\to \Delta_{X_2}$ continuous map between compact Hausdorff spaces, \hbox{$f_\Gs:\Gs_1\to\Gs_2$} isomorphism of groupoids and where $\F:f^\bullet(\Es_2)\to \Es_1$ is continuous \hbox{$*$-functor} that is also a fiberwise linear bundle map over $\Xs_1$.\footnote{Recall that with the same notation used in~\cite[Section~3]{BCL4}, $f^\bullet(\Es_2)$ denotes the total space of the standard pull-back $(f^\bullet(\Es_2),\pi^f,\Xs_1)$ of the Fell bundle $\Es_2$ under the map $f:\Xs_1\to\Xs_2$ and that \hbox{$f^{\pi_2}:f^\bullet(\Es_2)\to \Es_2$} is the unique bundle morphism such that $f\circ\pi_2^f=\pi_2\circ f^{\pi_2}$.} 
\end{definition} 
Note that for all $p\in \Delta_X$, $\Es_p:=\uplus_{g\in \Gs}\Es_{p_g}=\pi^{-1}(\{p\}\times\Gs_\Os)$ is basically a rank-one Fell bundle over $\Gs_\Os$. Furthermore, it is always possible to find a $*$-functor $\gamma_p:\Es_p\to\CC$. 
Note also that, since a constant finite rank Fell bundle is a Hermitian vector bundle, $\Es_g:=\pi^{-1}(\Delta_X\times\{g\})$ is actually a Hermitian vector bundle over $\Delta_X$. 
\begin{remark}
Morphisms of spaceoids constitute a category $\Tf_\Af$ with composition and identities given by: 
$(f_2,\F_2)\circ(f_1,\F_1):=(f_2\circ f_1, \F_1\circ f_1^\bullet(\F_2)\circ \Theta^{\Es_3}_{f_2,f_1})$, 
and $\iota_{(\Es,\pi,\Xs)}:=(\iota_{\Xs},\iota_{\Xs}^\pi)$,  
where $f_2\circ f_1:=({f_2}_\Delta\circ {f_1}_\Delta, {f_2}_\Gs\circ {f_1}_\Gs)$, 
$\Theta_{f_1,f_2}^{\Es_3}:(F_2\circ f_1)^\bullet(\Es_3)\to f_1^\bullet(f_2^\bullet(\Es_3))$ is the canonical isorphism of standard pull-backs and $f_1^\bullet(\F_2):f_1^\bullet (f_2^\bullet(\Es_3))\to f_1^\bullet(\Es_2)$ is the image of the map 
$\F_2:f_2^\bullet(\Es_3)\to\Es_2$ under the standard pull-back functor $f_1^\bullet$ between the categories of bundles over $\Xs_2$ and $\Xs_1$. 
\end{remark}

In our previous work (see~\cite{BCL4} and also~\cite[Section~4.2]{BCL0}) we obtained the following horizontal categorification of Gel'fand duality theorem for commutative full C*-categories:
\begin{theorem}[Categorified Gel'fand Theorem] \label{th: catgel}
There is a canonical duality $(\Gamma_\Af,\Sigma_\Af)$ between the category $\Tf_\Af$ of  
morphisms between spaceoids and the category $\Af$ of object-bijective $*$-functors between small commutative full C*-categories.  
\end{theorem}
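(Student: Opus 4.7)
The plan is to construct a pair of contravariant functors $\Gamma_\Af:\Tf_\Af\to\Af$ (sections) and $\Sigma_\Af:\Af\to\Tf_\Af$ (spectrum), then to exhibit the canonical natural isomorphisms $\iota_\Af\simeq\Gamma_\Af\circ\Sigma_\Af$ and $\iota_{\Tf_\Af}\simeq\Sigma_\Af\circ\Gamma_\Af$. Since the full argument is already given in~\cite{BCL4}, what follows is a proof \emph{roadmap} that highlights the categorified counterpart of each ingredient of classical Gel'fand duality.

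On objects, $\Gamma_\Af$ is the sections construction: for a spaceoid $(\Es,\pi,\Xs)$ with $\Xs=\Delta_X\times\Gs_\Os$ and for each $g=(A,B)\in\Gs_\Os$, the restriction $\Es_g:=\pi^{-1}(\Delta_X\times\{g\})$ is a rank-one Hermitian line bundle over $\Delta_X\cong X$, and we set $\Gamma_\Af(\Es)_{AB}:=\Gamma(\Es_g)$. The fiberwise composition and involution in the Fell bundle make $\bigsqcup_{A,B\in\Os}\Gamma(\Es_g)$ into a C*-category with object set $\Os$; the diagonal C*-algebras $\Gamma(\Es_{(A,A)})\cong C(X)$ are commutative unital (by unitality of the Fell bundle and rank one), while saturation of the rank-one bimodules $\Gamma(\Es_g)$ delivers fullness. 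On arrows, a spaceoid morphism $(f,\F):(\Es_1,\pi_1,\Xs_1)\to(\Es_2,\pi_2,\Xs_2)$ yields the object-bijective $*$-functor $\sigma\mapsto \F\circ(\sigma\circ f)$ from $\Gamma_\Af(\Es_2)$ to $\Gamma_\Af(\Es_1)$.

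In the opposite direction, $\Sigma_\Af$ is the spectrum construction. Given a commutative full C*-category $\Cs$ with $\Os:=\Ob_\Cs$, classical Gel'fand provides the spectrum $X_A$ of each commutative unital C*-algebra $\Cs_{AA}$. Because the Hilbert bimodule $\Cs_{AB}$ is full by saturation, it implements a strong Morita equivalence between $\Cs_{AA}$ and $\Cs_{BB}$, whence a canonical homeomorphism $X_A\cong X_B$; associativity of composition in $\Cs$ forces these to cohere into a single compact Hausdorff space $X$. For each $g=(A,B)$, the Hilbert $C(X)$-module $\Cs_{AB}$ is finitely generated projective of rank one, so by Serre-Swan there is a unique (up to isomorphism) Hermitian line bundle $\Es_g\to X$ with $\Gamma(\Es_g)\cong\Cs_{AB}$. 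Assembling the $\Es_g$ over $\Delta_X\times\Gs_\Os$ yields the spaceoid $\Sigma_\Af(\Cs)$; on arrows, functoriality follows from naturality of the classical Gel'fand transform and of Serre-Swan, after checking the pullback coherence data $\Theta^{\Es_3}_{f_2,f_1}$.

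Finally, one produces the two natural isomorphisms: the categorified Gel'fand transform $\Cs_{AB}\to\Gamma(\Sigma_\Af(\Cs)_g)$, $x\mapsto(\ev_p(x))_{p\in X}$, is a fiberwise $*$-isomorphism by the one-dimensional Gel'fand theorem applied to each fiber; and the fiberwise evaluation $\Es\to\Sigma_\Af(\Gamma_\Af(\Es))$ is a morphism of Fell bundles recovering the base space from $C(X)\cong\Gamma(\Es_{(A,A)})$. The main technical obstacle is the construction of the single base space $X$: one must verify that the Morita-theoretic homeomorphisms $X_A\cong X_B$ induced by the various $\Cs_{AB}$ glue compatibly (i.e.~that the composite $X_A\to X_B\to X_C\to X_A$ is the identity), which uses associativity of composition together with saturation in an essential way and constitutes the step where horizontal categorification genuinely departs from the classical Gel'fand duality.
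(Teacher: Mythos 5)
The paper does not actually prove Theorem~\ref{th: catgel} here: it is recalled from~\cite{BCL4}, where the duality is established by precisely the section-functor/Gel'fand-transform strategy you outline (restriction to the line-bundle blocks $\Es_g$ and their continuous sections in one direction; Gel'fand spectra of the diagonal algebras $\Cs_{AA}$, identified via the Morita equivalences coming from saturation, together with the Hermitian Serre--Swan/Takahashi correspondence in the other). Your roadmap matches that argument, and you correctly isolate the genuinely categorified technical point --- the coherent gluing of the homeomorphisms $X_A\cong X_B$ and the rank-one property of the symmetric imprimitivity bimodules $\Cs_{AB}$, both of which rest on commutativity plus fullness --- so nothing further is needed beyond executing the details as in the cited reference.
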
 

\section{Monoidal Categories and Enriched Bundles.}\label{sec: monoidal}

For convenience of the reader, we recall the standard definition of (weak-)monoidal category (see for example 
T.~Leinster~\cite[Section~1.2]{Le} and M.~Kelly~\cite[Section~1.1]{Ke}). 
\begin{definition}\label{def: monoidal}
A \emph{monoidal category} is a category $\Mf$ equipped with a covariant bifunctor $\otimes:\Mf\times\Mf$ and an identity object 
$I\in\Ob_\Mf$ such that for all objects $A,B,C\in \Ob_\Mf$ there are natural isomorphisms 
\begin{gather*}
(A\otimes B)\otimes C \xrightarrow{\alpha_{A,B,C}} A\otimes (B\otimes C), \\
A\otimes I \xrightarrow{\rho_A} A, \quad I\otimes B\xrightarrow{\lambda_B} B  
\end{gather*}
making any diagram with objects in $\Mf$ and whose morphisms are obtained by (eventually repeated) applications of the functor 
$\otimes$ to instances of $\alpha$, $\lambda$, $\rho$, $\iota$ and their inverses commutative.
\end{definition}

In a completely similar way, we propose here a definition of (weak-)monoidal $*$-category:  
\begin{definition}\label{def: *-monoidal}
A \emph{(weak-)monoidal $*$-category} 
$\Mf^*$ is just a monoidal category equipped with a 
contravariant 
functor $\dag: \Mf^*\to\Mf^*$ such that for all $A,B\in \Ob_{\Mf^*}$ there are natural isomorphisms
\begin{gather*}
(A^\dag)^\dag  \xrightarrow{\beta_A} A, \quad 
(A\otimes B)^\dag \xrightarrow{\gamma_{A,B}} 
B^\dag\otimes A^\dag,  
\end{gather*}
making any possible diagram in the category $\Mf^*$ whose morphisms are obtained by (eventually reapeated) applications of the functors $\otimes$ and $\dag$ to instances of $\alpha$, $\beta$, $\gamma$, $\lambda$, $\rho$, $\iota$ and their inverses commutative.

For an object $A$ (or respectively an arrow $f:A\to B$) in $\Mf^*$, we will say that $A^\dag$ (respectively $f^\dag:B^\dag\to A^\dag$) is its \emph{monoidal dual}.  
\end{definition}

\begin{remark} 
As a basic example illustrating the previous definition, the reader can consider the category of continuous linear maps between Hilbert spaces. In this case the monoidal dual of an object is just the ordinary dual and the monoidal dual of an arrow is the ordinary transposed map. The map $\beta_H^{-1}:H\to (H^\dag)^\dag$ is the canonical isomorphism of a Hilbert space $H$ with its double dual $(H^\dag)^\dag$ associating to every $x\in H$ the evaluation in $x$ of elements of the dual (every Hilbert space is reflexive). Similarly $\gamma_{H,K}^{-1}:K^\dag\otimes H^\dag\to (H\otimes K)^\dag$ is the canonical isomorphism associating to $\psi\otimes\phi\in K^\dag\otimes H^\dag$ the map $h\otimes k\mapsto \phi(h)\psi(k)$. Other more involved examples are presented in section~\ref{sec: examples}.  

Note that in the definition of (weak-)monoidal $*$-category we have required the functor $\dag$ to be contravariant in order to ``match'' with the contravariance of the involution of a $*$-category. It is of course possible to give a  similar definition for covariant functors and examples can be obtained considering conjugation of linear continuous maps on complex Hilbert spaces. 
\end{remark}

\begin{remark}
Note that from time to time, in the following, we will need to deal with a partially defined monoidal composition bifunctor and hence with several ``partial'' monoidal identities (as for example in the case of tensor product of bimodules) that otherwise satisfies the very same axioms in the definition of (weak-)monoidal category. Such ``many-objects'' version  of a \hbox{(weak-)monoidal} category can be precisely formalized via the notion of J.~B\'enabou's \emph{bicategory} (see for example S.~Lack~\cite[Section~4]{L}) that we will freely use.

In a completely parallel way, a (weak-)monoidal $*$-category admits a ``many-object'' horizontal categorification via the notion of 
\emph{involutive bicategory}.  
\end{remark}

Categories and Fell bundles are special instances of a general concept of enriched bundle that now we proceed to define, following closely M.~Kelly's idea of enriched category~\cite{Ke}.  

\begin{definition} \label{def: en-cat}
Let $\Mf$ be a monoidal category. An \emph{$\Mf$-enriched categorical bundle} over the base category $\Xs$ is a bundle\footnote{For now $\pi$ can be assumed to be just a surjective map. Continuity of $\pi$ will be required whenever $\Es$ and $\Xs$ are topological spaces. Whenever the objects of the monoidal category $\Mf$ are Banach spaces (or more generally, disjoint union of Banach spaces), we require $(\Es,\pi,\Xs)$ to be a Banach bundle. In this last case, also the maps $\mu_{x,y}\circ \otimes:\Es_x\times\Es_y\to\Es_{x\circ y}$ are supposed to be bilinear such that $\|\mu_{x,y}(e,f)\|\leq\|e\|\cdot\|f\|$. } $\pi:\Es\to\Xs$ such that:
\begin{itemize}
\item 
for all $x\in \Xs$, the fiber $\Es_x:=\pi^{-1}(x)$ is an object of the monoidal category $\Mf$; 
\item 
for all pairs $(x,y)\in\Xs^2$ of composable arrows in $\Xs$, there is an associated morphism $\mu_{x,y}\in \Hom_\Mf(\Es_x\otimes \Es_y,\Es_{x\circ y})$ that makes the following diagram commutative:
\begin{equation*}
\xymatrix{
\Es_x\otimes(\Es_y\otimes \Es_z) \ar[d]_{\iota_{\Es_x}\otimes\mu_{y,z}} \ar[rr]^{\alpha_{\Es_x,\Es_y,\Es_z}} &   
& (\Es_x\otimes \Es_y)\otimes \Es_z \ar[d]^{\mu_{x,y}\otimes\iota_{\Es_z}} \\
\Es_x\otimes \Es_{y\circ z} \ar[dr]_{\mu_{x,y\circ z}} & &
\Es_{x\circ y}\otimes \Es_z \ar[dl]^{\mu_{x\circ y, z}} \\
& \Es_{x\circ y\circ z}. 
}
\end{equation*} 
\item
for all identities $y\in \Xs^o$, there is a morphism $j_y\in\Hom_\Mf(I,\Es_y)$ that makes the following diagrams commutative:
\begin{equation*}
\xymatrix{
\Es_x\otimes I\ar[dr]_{\rho_{\Es_x}} \ar[rr]^{\iota_{\Es_x}\otimes j_y} & &  \Es_x\otimes \Es_y, \ar[dl]^{\mu_{x,y}}\\
& \Es_x & 
}
\quad 
\xymatrix{
I \otimes \Es_z\ar[dr]_{\lambda_{\Es_z}} \ar[rr]^{j_y\otimes \iota_{\Es_z}} & & \Es_y\otimes \Es_z. \ar[dl]^{\mu_{y,z}}\\
& \Es_z & 
}
\end{equation*}
\end{itemize}
\end{definition}

\begin{remark} 
A basic inspiring example, that the reader might keep in mind in order to justify the following abstract definitions, can be obtained considering 
the Morita-Rieffel $*$-category $\Xs$ (consisting of isomorphism classes of unital Hilbert C*-modules over unital commutative \hbox{C*-algebras}) and choosing for any equivalence class $x\in \Xs$ a given representative Hilbert \hbox{C*-module} $\Es_x\in x$. 
In this case, the (weak-)monoidal $*$-category $\Mf^*$, mentioned in definition~\ref{def: en-fell} here below, is the category of unital Hilbert 
C*-modules with monoidal product the usual tensor product of Hilbert C*-modules and monoidal duals given by the Rieffel duals~\cite{R}. 
\end{remark}

\begin{definition}\label{def: en-fell}
Let $\Mf^*$ be a (weak-)monoidal $*$-category. Let $\Xs$ be a $*$-category. 

An \emph{$\Mf^*$-enriched $*$-categorical bundle}\footnote{Again, whenever the object of $\Mf^*$ are Banach spaces (or more generally disjoint union of Banach spaces) we ask $\nu_x:\Es_x^\dag\to\Es_{x^*}$ to be linear and isometric $\|\nu_x(e)\|=\|e\|$.} over the base $*$-category $\Xs$ is a categorical bundle enriched in the 
(weak-)monoidal $*$-category $\Mf^*$ such that 
\begin{itemize}
\item
for all $x\in \Xs$, there is an associated morphism 
$\nu_x\in \Hom_{\Mf^*}((\Es_x)^\dag,\Es_{x^*})$ 
such that the following diagrams commute: 
\begin{equation*}
\xymatrix{
((\Es_x)^\dag)^\dag \ar[rr]^{\beta_{\Es_x}} \ar[rd]_{(\nu_x)^\dag} & & \Es_x 
\\
& (\Es_{x^*})^\dag \ar[ur]_{\nu_{x^*}} & 
}
\xymatrix{
(\Es_x\otimes \Es_y)^\dag \ar[d]_{(\mu_{x,y})^\dag}  \ar[rr]^{\gamma_{\Es_x,\Es_y}} & & (\Es_y)^\dag\otimes(\Es_x)^\dag  
\ar[d]^{\nu_y\otimes\nu_x}\\
(\Es_{xy})^\dag \ar[dr]_{\nu_{xy}} & & \Es_{y^*}\otimes\Es_{x^*} \ar[dl]^{\mu_{y^*,x^*}} \\
& \Es_{(xy)^*}=\Es_{y^*x^*} &    
}
\end{equation*}
\end{itemize}
\end{definition}

\begin{remark}
Categorical bundles enriched in a bicategory and $*$-categorical bundles enriched in an involutive bicategory can be defined in exactly the same way, as long as the assignement $x\mapsto \Es_x$ is weakly $*$-functorial modulo the morphisms $\mu,j,\nu$.
It is understood that, if necessary, in all the subsequent material, whenever the term monoidal ($*$-)category appears, it can be substituted by (involutive) bicategory. 

Clearly, for a (concrete) monoidal category $\Mf$, 
an $\Mf$-enriched category (as defined in T.~Leinster~\cite[Section~1.3]{Le} for example) is just an $\Mf$-enriched categorical bundle whose base category $\Xs$ is a discrete equivalence relation. 

Clearly Fell bundles are just $\Mf^*$-enriched $*$-categorical bundles where $\Mf^*$ is the  involutive bicategory of Hilbert \cs-bimodules. 

A spaceoid can be seen as an $\Mf^*$-enriched $*$-categorical bundle over the equivalence relation $\Xs:=\Rs_\Os$, where the (weak-)monoidal $*$-category $\Mf^*$ is the category of Hermitian line bundles over the compact Hausdorff topological space $X$. 

Note also that since $\Mf$ and respectively $\Mf^*$ are concrete categories (i.e.~are subategories of the category of sets) it is always possible to use the canonical isomorphisms $\alpha, \rho, \lambda$ in definition~\ref{def: monoidal} (together with the isomorphims $\beta,\gamma$ in 
definition~\ref{def: *-monoidal} for the $*$-monoidal situation) to introduce on the total space $\Es$ of the bundle $(\Es,\pi,\Xs)$ a structure of category (respectively of $*$-category) in such a way that $\pi:\Es\to\Xs$ becomes a suitable functor (respecively a $*$-functor). 
\end{remark}

\begin{definition}
A $*$-categorical bundle over an inverse $*$-category $\Xs$ enriched in a concrete (weak-)monoidal $*$-category $\Mf^*$ whose objects are equipped with a Banach norm is called a \emph{Fell bundle enriched in the (weak-)monoidal $*$-category $\Mf^*$} if it satisfies the following version of the C*-property $\|\mu_{x^*,x}(\nu_x(e),e)\|=\|e\|^2$, for all $e\in \Es_x$ and, for all $e\in \Es_x$, there exists $h\in \Es_{x^*x}$ such that $\mu_{x^*,x}(\nu_x(e),e)=\mu_{x^*x,x^*x}(\nu_{x^*x}(h),h)$. 
\end{definition}

\begin{remark}
Given a categorical bundle $(\Es,\pi,\Xs)$, enriched in the monoidal category $\Mf$ and given a functor 
$f:\Zs\to\Xs_2$, the standard $f$-pull-back $(f^\bullet(\Es),\pi^f,\Zs)$ of $(\Es,\pi,\Xs)$ has a natural structure of categorical bundle enriched in the monoidal category $\Mf$. 
Similarly the $f$-pull-back of the involutive categorical bundle $(\Es,\pi,\Xs)$ enriched in the \hbox{(weak-)monoidal} \hbox{$*$-category} $\Mf^*$ under a 
$*$-functor $f:\Zs\to\Xs$ has a natural structure of involutive categorical bundle enriched in the same (weak-)monoidal $*$-category $\Mf^*$. 
\end{remark}

\begin{definition}\label{def: mor-X}
A \emph{morphism $\F:(\Es^1,\pi^1,\Xs)\to(\Es^2,\pi^2,\Xs)$ of categorical bundles}, over the same category $\Xs$, \emph{enriched in the monoidal category $\Mf$}, is a map $\F:\Es^1\to\Es^2$ satisfying $\pi^2\circ\F=\pi^1$ such that for all $x\in \Xs$, 
$\F_x:\Es^1_x\to\Es^2_x$ is a morphism in $\Mf$ with the properties 
\begin{gather*}
\F_x\otimes(\F_y\otimes\F_z)\circ \alpha_{\Es^1_x,\Es^1_y,\Es^1_z}=\alpha_{\Es^2_x,\Es^2_y,\Es^2_z}\circ (\F_x\otimes\F_y)\otimes\F_z, \quad \forall (x,y,z)\in \Xs^{(3)}, 
\\
\F_x\circ\rho_{\Es^1_x}=\rho_{\Es^2_x}\circ(\F_x\otimes\iota_I),  \quad 
\F_x\circ\lambda_{\Es^1_x}=\lambda_{\Es^2_x}\circ (\iota_I\otimes \F_x), \quad \forall x\in \Xs, 
\\ 
\F_{x\circ y}\circ \mu^1_{x,y}=\mu^2_{x,y}\circ (\F_x\otimes\F_y), \quad \forall (x,y)\in \Xs^{(2)}, 
\\
j^2_y\circ\iota_I=\F_y\circ j^1_y, \quad \text{for all the identities $y$ in $\Xs^o$}.
\end{gather*}

Whenever $\Es^1,\Es^2$ are topological spaces, we require $\F$ to be a continuous map and whenever the object of the monoidal category $\Mf$ are (disjoint union of) Banach spaces, we require $\F$ to be fiberwise linear. 

Note again that for a categorical bundle $(\Es,\pi,\Xs)$ enriched in the monoidal category $\Mf$, the total space $\Es$ can be equipped with the structure of a category and a morphism $\F$ as defined above becomes immediately a functor. 
\end{definition}

\begin{definition}
Given two categorical bundles enriched in a monoidal category $(\Es_j,\pi_j,\Xs_j)$, for $j=1,2$, a morphism between them is a pair 
$(f,\F):(\Es_1,\pi_1,\Xs_1)\to(\Es_2,\pi_2,\Xs_2)$ such that $f:\Xs_1\to\Xs_2$ is a functor and 
$\Fs: (f^\bullet(\Es_2),\pi_2^f,\Xs_1)\to(\Es_1,\pi_1,\Xs_1)$ is a morphisms of enriched categorical bundles, over the category $\Xs_1$ as specified in definition~\ref{def: mor-X}. 
\end{definition}

In a completely similar way, for Fell bundles enriched in a (weak-)monoidal $*$-category, 
\begin{definition} \label{def: mor-X*}
A \emph{morphism $\F:(\Es^1,\pi^1,\Xs)\to(\Es^2,\pi^2,\Xs)$ of $*$-categorical bundles}, on the same $*$-category $\Xs$, \emph{enriched in the same (weak-)monoidal $*$-category $\Mf^*$}, is a morphism of categorical bundles over $\Xs$ such that the following further properties are satisfied: 
\begin{gather*}
\F_x\circ\beta_{\Es^1_x}=\beta_{\Es^2_x}\circ (\F_x^\dag)^\dag, \quad \forall x\in \Xs, 
\\
(\F_y^\dag\otimes\F_x^\dag)\circ\gamma_{\Es^1_x,\Es^1_y}=\gamma_{\Es^2_x,\Es^2_y}\circ (F_x\otimes\F_y)^\dag, \quad \forall x,y\in \Xs, 
\\
\F_{x^*}\circ \nu_x=\nu_x\circ \F_x^\dag, \quad \forall x\in \Xs. 
\end{gather*}
Again, since the total space of a $*$-categorical bundle enriched in a (weak-)monoidal $*$-category is naturally equipped with the structure of a 
$*$-category, every such morphism $\F$ becomes a $*$-functor. 
\end{definition}

\begin{definition}
Given two Fell bundles $(\Es_j,\pi_j,\Xs_j)$, $j=1,2$, enriched in a \hbox{(weak-)monoidal} $*$-category,  
a morphism between them is a pair 
$(f,\F):(\Es_1,\pi_1,\Xs_1)\to(\Es_2,\pi_2,\Xs_2)$ such that $f:\Xs_1\to\Xs_2$ is a $*$-functor and 
$\Fs: (f^\bullet(\Es_2),\pi_2^f,\Xs_1)\to(\Es_1,\pi_1,\Xs_1)$ is a morphisms of \hbox{$*$-categorical} bundles over the $*$-category $\Xs_1$, as specified in definition~\ref{def: mor-X*}. 
\end{definition}

The relations between categories, categorical bundles and their $\Mf$-enriched versions can be formalized in the following diagram 
(the involutive case ($*$-) being optional):
\begin{equation*}
\xymatrix{
\text{($*$-)Categories} \ar@{^{(}->}[rr] \ar@{^{(}->}[d]& & \text{($*$-)Enriched Categories} \ar@{^{(}->}[d] 
\\
\text{($*$-)Categorical Bundles} \ar@{^{(}->}[rr] & & \text{($*$-)Enriched Categorical Bundles} 
}
\end{equation*}

The ``embeddings'' from ``left'' to ``right'' are just examples of \emph{enrichment functors}, while the ``embeddings'' from top to bottom are examples of what we might call \emph{bundlefication functors}.

\section{Spectral Spaceoids for Saturated Unital Fell Bundles}\label{sec: equivalence}

We now come to our proposed goal: to give alternative descriptions of the spectrum $\Sigma(\Cs)$ of a commutative full C*-category $\Cs$  
in terms of Fell bundles enriched in a suitable \hbox{(weak-)monoidal} $*$-category. 

\subsection{Relevant Examples of (Weak-)Monoidal $*$-Categories}\label{sec: examples}

We introduce here in some detail a few of the monoidal categories and (weak-)monoidal \hbox{$*$-categories} that are relevant in our discussion of spectral spaceoids for commutative full \hbox{C*-categories} and saturated commutative Fell bundles. 

\begin{example} 
Let $\As$ be a fixed unital C*-algebra and let $\Mf_\As$ denote the category with objects unital Hilbert C*-bimodules over the 
C*-algebra $\As$, with morphisms given by right and left $\As$-linear maps subjected to the usual composition of maps. 
The category $\Mf_\As$ is a \hbox{(weak-)monoidal} $*$-category with monoidal product given by the Rieffel tensor product of 
Hilbert C*-bimodules (see M.~Rieffel~\cite{R}), monoidal identity given by the C*-algebra $\As$ (considered as a Hilbert \hbox{C*-bimodule} over itself) and monoidal dual given by the Rieffel dual of a Hilbert C*-bimodule~\cite{R}. 

Whenever the C*-algebra $\As$ is commutative, the full subcategory $\Sf\Mf_\As$ of symmetric Hilbert C*-bimodules over $\As$, is again a 
(weak-)monoidal $*$-category.
\end{example}

\begin{example}
Let $\Xs$ be a fixed compact Hausdorff topological space and let $\Hf_\Xs$ be the category with objects Hilbert bundles $(\Es,\pi,\Xs)$ over the space $\Xs$ and morphisms given by the ``bundle maps'' i.e.~the functions $F:\Es_1\to\Es_2$ such that $\pi_2\circ F=\pi_1$ subjected to the usual composition of maps. 
The category $\Hf_\Xs$ is a (weak-)monoidal $*$-category with monoidal product given by the fiberwise tensor product of the Hilbert bundles, identity provided by the trivial line bundle and monoidal dual given by the fiberwise conjugate Hilbert bundle.  

The (weak-)monoidal $*$-categories $\Hf_\Xs$ 
and $\Sf\Mf_{C(\Xs)}$ 
are equivalent 
as a consequence of the more general Takahashi duality~\cite{Ta1,Ta2}. Whenever we restrict to the full subcategory $\Vf_\Xs$ of finite-rank Hilbert bundles\footnote{These are necessarily Hermitian vector bundles (see J.~Fell-R.~Doran~\cite[Section~13]{FD}).} we recover the Hermitian version of Serre-Swan equivalence with the full subcategory $\Pf_{C(\Xs)}$ of finite projective unital symmetric Hilbert 
C*-bimodules over $C(\Xs)$. 
\end{example}

\begin{example}
More generally we can consider the category $\Mf_\bullet$ with objects unital Hilbert \hbox{C*-bimodules} over (possibly different pairs of) unital 
C*-algebras with morphisms given by the triples $(\phi,\Phi,\psi): {}_\As\Ms_\Bs\to {}_{\As_1}\Ns_{\Bs_1}$ with 
$\phi:\As\to\As_1$ and $\psi:\Bs\to\Bs_1$  unital \hbox{$*$-homomorphisms} of C*-algebras and $\Phi:\Ms\to\Ns$ continous map such that 
$\Phi(a\cdot x\cdot b)=\phi(a)\cdot\Phi(x)\cdot \psi(b)$, for all $x\in \Ms$, $a\in \As$ and $b\in\Bs$. 
The category $\Mf_\bullet$ is a bicategory i.e.~a ``many-object (weak-)monoidal $*$-category'' where the partial ``monoidal'' product is given by Rieffel tensor product of Hilbert \hbox{C*-bimodules}, the ``monoidal'' identities are given by the unital C*-algebras (considered as Hilbert C*-bimodules over themselves) and the ``monoidal'' involution is given by the Rieffel dual of a Hilbert C*-bimodule. 
Whenever the algebras are commutative, the full subcategory $\Sf\Mf_\bullet$ of symmetric Hilbert C*-bimodules is again a multi-object 
(weak-)monoidal $*$-category. 
\end{example}

\begin{example} \label{ex: h}
Let $\Hf_\bullet$ denote the category with objects Hilbert bundles over compact Hausdorff topological spaces and morphisms given by pairs $(f,\F):(\Es_1\pi_1,\Xs_1)\to(\Es_2,\pi_2,\Xs_2)$, where $f:\Xs_1\to\Xs_2$ is continuous map and 
$\F:(f^\bullet(\Es_2),\pi_2^f,\Xs_1)\to(\Es_1,\pi_1,\Xs_1)$ a bundle map in the category $\Hf_{\Xs_1}$ defined on the standard 
$f$-pull-back of $(\Es_2,\pi_2,\Xs_2)$. 
The category $\Hf_\bullet$ is again a bicategory with partial ``monoidal'' product given by fiberwise tensor product of Hilbert bundles, ``monoidal'' identities given by trivial line bundles and ``monoidal'' involution given by the fiberwise conjugate of a Hilbert bundle.

The category $\Hf_\bullet$ is in duality with the category $\Sf\Mf_\bullet$ via Takahashi theorem~\cite{Ta1,Ta2}.

\end{example} 

The previous examples of monoidal $*$-categories and $*$-bicategories can be further generalized considering bimodules for C*-categories or even more generally bimodules for Fell bundles. 

\begin{example}
Following P.~Mitchener~\cite[Section~8]{M1}, we consider the category $\Mf_{\text{C*-cat-}\Cs}$ with objects Hilbert C*-bimodules over a C*-category $\Cs$ and continuous $\Cs$-linear maps as morphisms. 

The category $\Mf_{\text{C*-cat-}\Cs}$ is a (weak-)monoidal $*$-category with monoidal product given by the tensor product of 
$\Cs$-bimodules and monoidal involution given by the dual of such $\Cs$-bimodules.

The example can be further extended considering the category $\Mf_{\text{Fell-}\Es}$ of Hilbert \hbox{C*-bimodules} over a given Fell bundle $\Es$. 
\end{example}

\subsection{Equivalent Notions of Spaceoid}

Consider the category of isomorphism of transitive groupoids\footnote{These are groupoids $\Gs$ such that for all $A,B\in \Ob_\Gs$ the set $\Gs_{AB}\neq\varnothing$.} and denote by $\Rf$ its full subcategory of total equivalence relations. By $\Kf$ we denote the category of continuous maps of compact Hausdorff spaces. 
Furthermore $\Lf_\bullet$ denotes the full subcategory of the category $\Hf_\bullet$ in example~\ref{ex: h} determined by Hermitian line bundles over compact Hausdorff spaces and by $\Lf_\Kf$ the disjoint union of of the categories $\Lf_X$ for $X\in \Kf$. 

\begin{theorem}
There is a natural isomorphism between the category $\Tf_\Af$ of spaceoids for small commutative full C*-categories and the category $\Ff^{\Lf_\Kf}_\Rf$ of Fell bundles over total equivalence relations enriched in the (weak-)monoidal $*$-category $\Lf_\Kf$ of Hermitian line bundles over a given compact Hausdorff space. 
\end{theorem}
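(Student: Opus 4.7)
The plan is to exhibit the isomorphism at the level of objects first and then lift it to morphisms, exploiting the product structure $\Xs = \Delta_X \times \Gs_\Os$ of the base category of any spaceoid, which is tailor-made for the kind of "slicing" that underlies enrichment.

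\textbf{Object correspondence.} Given a spaceoid $(\Es, \pi, \Xs)$ with $\Xs = \Delta_X \times \Gs_\Os$, I would define the associated $\Lf_X$-enriched Fell bundle over the total equivalence relation $\Rs_\Os = \Gs_\Os$ by the assignment
\begin{equation*}
g \mapsto \Es^g := \pi^{-1}(\Delta_X \times \{g\}),
\end{equation*}
which, as observed in the remarks after the definition of spaceoid, is a Hermitian line bundle over $\Delta_X \simeq X$ (hence an object of $\Lf_X$). The enrichment data $\mu_{g,h} : \Es^g \otimes \Es^h \to \Es^{g \circ h}$, $j_y : \mathbf{1}_X \to \Es^y$, $\nu_g : (\Es^g)^\dag \to \Es^{g^*}$ are obtained by restricting the Fell-bundle composition, the unit sections (guaranteed by unitality), and the fiberwise involution respectively — all of which are fiberwise bilinear/conjugate-linear maps over the common base $\Delta_X$, so they give morphisms in $\Lf_X$. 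Commutativity of the pentagon, triangle, and $\nu$-coherence diagrams in Definitions~\ref{def: en-cat} and~\ref{def: en-fell} then reduces fiberwise to the associativity, unit, and C*-involution axioms of the Fell bundle $\Es$.

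\textbf{Inverse construction.} Conversely, given a Fell bundle $(\Es^\bullet, j, \nu, \mu)$ over $\Rs_\Os$ enriched in $\Lf_X$, I would reassemble a spaceoid by taking the total space
\begin{equation*}
\Es := \biguplus_{g \in \Rs_\Os} \Es^g
\end{equation*}
with the obvious projection to $\Delta_X \times \Rs_\Os$ sending $e \in (\Es^g)_p$ to $(p,g)$. The Banach-bundle topology is the disjoint-union topology coming from each Hermitian line bundle $\Es^g$; composition and involution are defined on composable fibers by $\mu_{g,h}$ and $\nu_g$ followed by the canonical isomorphisms $\alpha, \beta, \gamma$ and the unit sections $j_y$. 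The Fell and rank-one axioms for $\Es$ are exactly the pointwise reading of the enrichment coherences together with the enriched C*-identity, and unitality of the fibers at identities is the existence of $j_y$.

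\textbf{Morphisms and naturality.} For morphisms, a pair $(f, \F)$ with $f = (f_\Delta, f_\Gs)$ between spaceoids decomposes cleanly because $f$ respects the product structure of the base: $f_\Gs$ gives the functor between the base groupoids $\Rs_{\Os_1} \to \Rs_{\Os_2}$ of the enriched picture, while $f_\Delta : X_1 \to X_2$ induces the underlying change-of-base for the enriching categories $\Lf_{X_2} \to \Lf_{X_1}$ via standard pull-back. The bundle component $\F$, restricted fiber by fiber over each $g$, is precisely a morphism of Hermitian line bundles $f_\Delta^\bullet(\Es^g_2) \to \Es^{f_\Gs(g)}_1$, i.e.\ the $g$-component of an enriched morphism as in Definitions~\ref{def: mor-X} and~\ref{def: mor-X*}. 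Conversely, a morphism of enriched Fell bundles reassembles into $(f,\F)$ in the obvious way.

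\textbf{Where the work lies.} The bijection at the level of objects and arrows is essentially tautological once the "product base" observation is in place; the genuine content is \emph{checking coherence}. The main obstacle I expect is verifying that the two compositions of these functors agree with the identities on the nose (or at least naturally isomorphically), including the compatibility of the canonical pull-back isomorphisms $\Theta$ entering the composition law of $\Tf_\Af$ with the associator-type morphisms one gets when composing enriched pull-backs of Hermitian line bundles. This is a routine but lengthy diagram chase organized by the $\alpha, \beta, \gamma$ coherence of $\Lf_\Kf$; everything else (topology, continuity of $\mu, \nu$, saturation, unitality) transfers transparently between the two pictures.
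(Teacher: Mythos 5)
Your proposal follows essentially the same route as the paper: slice the spaceoid along the discrete index $g\in\Rs_\Os$ to obtain Hermitian line bundles over $X$ with $\mu$, $j$, $\nu$ given by restriction of the Fell-bundle operations, reassemble by disjoint union in the reverse direction, and decompose morphisms $(f,\F)$ according to the product structure of the base. The only quibble is a notational slip in the morphism component (the enriched map should go $f_\Delta^\bullet(\Es_2^{f_\Gs(g)})\to\Es_1^{g}$, with the indices the other way around from what you wrote); otherwise your account is, if anything, more explicit than the paper's about where the coherence checking actually lies.
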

\begin{proof} 
Given a spaceoid $(\Es,\pi,\Delta_X\times\Rs_\Os)$ in the category $\Tf_\Af$, for all $AB\in \Rs_\Os$, its restriction  
to the base-block $\Delta_X\times \{AB\}$, denoted for short by $\Es_{AB}$, is a Hermitian line bundle over the space 
$\Delta_X\times\{AB\}$ that is naturally isomorphic to a Hermitian line bundle on the compact Hausdorff space $X$. 

For every pair of composable arrows $AB$, $BC\in \Rs_\Os$ there is a $\Ff_\Rf^{\Lf_\Kf}$ linear multiplication map 
\hbox{$\mu_{AB,BC}:\Es_{AB}\otimes\Es_{BC}\to\Es_{AC}$}, obtained by restriction of the multiplication on the total space of the spaceoid, that clearly satisfies the pentagon diagram in definition~\ref{def: en-cat}. 

For every $AA\in \Rs_\Os$, the restriction $\Es_{AA}$ to the diagonal block is naturally isomorphic to the trivial line bundle over $X$ and these isomorphisms with the trivial $\CC$-line bundle over $X$ satisfy the triangle diagrams in definition~\ref{def: en-cat}. 

Finally for every $AB\in \Rs_\Os$, there is a (fiberwise linear) morphism $\nu_{AB}:\Es_{AB}^\dag\to\Es_{BA}$, obtained by restriction of the involution map on the total space of the spaceoid, that satisfies the diagrams in definition~\ref{def: en-fell}.

For every morphism $(f,\F):(\Es_1,\pi_1,\Delta_{X_1}\times\Rs_{\Os_1})\to(\Es_2,\pi_2,\Delta_{X_2}\times\Rs_{\Os_2})$ of spaceoids, 
$f_\Rs$ is already an isomorphism of total equivalence relations and so a morphism in $\Rf$; the pair $\H_{AB}:=(f_\Delta,\F|_{AB})$, for all $AB\in \Rs_{\Os_1}$, is a morphism in $\Lf_\bullet$ from $(\Es_2)_{f_\Rs(AB)}$ to $\Es_{AB}$ that satisfies all the requirements in definitions~\ref{def: mor-X} and~\ref{def: mor-X*} and hence the pair $(f_\Rs,\H)$ is a morphism in the category 
$\Ff_\Rf^{\Lf_\Kf}$. Is it possible to check that the map $\Tg$ just described from objects and morphisms of $\Es_\As$ to object and morphisms of $\Ff_\Rf^{\Lf_\Kf}$ is functorial. 

\smallskip

In the reverse direction, given a Fell bundle $(\Fs,\rho,\Rs_\Os)$ over a total equivalence relation $\Rs_\Os$ enriched in the (weak-)monoidal 
$*$-category $\Lf_X$, for every $AB\in \Rs_\Os$, the fiber $\Fs_{AB}$ is a Hermitian line bundle over the compact Hausdorff space $X$ and, as already noted, the total space given by the disjoint union $\Es:=\cup_{AB\in \Rs_\Os}\Fs_{AB}$ becomes a 
$*$-category, the projection $\pi$ onto the product topological $*$-category$ \Xs:=\Delta_X\times\Rs_\Os$ defined by glueing the projections $\pi_{AB}$ of the Hermitian bundles $\Fs_{AB}$ is a $*$-functor. Clearly $(\Es,\pi,\Xs)$ is a Banach bundle (since it is a union of Hermitian vector bundles) and hence a spaceoid in $\Tf_\Af$. 

Given now a morphism $(f,\F):(\Fs_1,\rho_1,\Rs_{\Os_1})\to(\Fs_2,\rho_2,\Rs_{\Os_2})$ in $\Ff_\Rf^{\Lf_\Kf}$, where for every 
\hbox{$AB\in \Rs_{\Os_1}$}, $\F_{AB}:=(\phi,\Phi_{AB})$ with fixed $\phi:X\to Y$ is a morphism of Hermitian line bundles, we define a new morphism $(h,\H)$ of the corresponding spaceoids in $\Tf_\Af$ by taking $h_\Rs:=f$, $h_\Delta:=\phi$ and $\H|_{AB}:=\Phi_{AB}$. 
Again the map $\Sg$ here described from objects and morhisms of $\Ff_\Rf^{\Lf_\Kf}$ to $\Tf_\Af$ is functorial and it is an inverse of the functor $\Fg:\Tf_\Af\to\Ff_\Rf^{\Lf_\Kf}$ defined above. 
\end{proof}

\begin{theorem} 
There is a natural isomorphism between the category $\Es_\Af$ of spaceoids for small commutative full C*-categories and the category $\Ff_\Kf^{\Mf_{\text{C*-cat-}\Of}}$ of Fell bundles over compact Hausdorff topological spaces enriched in the bicategory 
$\Mf_{\text{C*-cat-}\Of}$ of small one-dimensional \hbox{C*-categories} with a given set of objects. 
\end{theorem}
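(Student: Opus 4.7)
The plan is to mimic the strategy of the preceding theorem, exchanging the roles of the continuous variable $X$ and the discrete variable $\Os$. Whereas before one sliced a spaceoid along $\Delta_X$ to extract, for each $(A,B)\in\Rs_\Os$, a Hermitian line bundle over $X$, here I would slice along $\Rs_\Os$ to extract, for each $p\in X$, a rank-one Fell bundle over $\Rs_\Os$ --- equivalently, a small one-dimensional C*-category with object set $\Os$, which is precisely an object of the bicategory $\Mf_{\text{C*-cat-}\Of}$.

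In one direction, given a spaceoid $(\Es,\pi,\Delta_X\times\Rs_\Os)\in\Tf_\Af$, I would define a bundle $(\Fs,\rho,X)$ by setting $\Fs_p:=\pi^{-1}(\{(p,p)\}\times\Rs_\Os)$ with topology, norm, and algebraic operations inherited from $\Es$. By Remark~\ref{rem: banach} this is a Banach bundle whose fiber at $p$ is itself the total space of a rank-one Fell bundle over $\Rs_\Os$; since the base $\Delta_X$ has only identity arrows, the enrichment data $\mu_{p,p}$, $j_p$, $\nu_p$ required by Definitions~\ref{def: en-cat} and~\ref{def: en-fell} come directly from the internal composition, unit, and involution of the C*-category $\Fs_p$, and the coherence axioms reduce to the axioms for the original spaceoid.

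Conversely, starting from $(\Fs,\rho,X)$ enriched in $\Mf_{\text{C*-cat-}\Of}$, I would reassemble its fibers by defining $\Es:=\sqcup_{p\in X}\Fs_p$, with projection $\pi$ sending an element of the $(A,B)$-hom-space of $\Fs_p$ to $((p,p),(A,B))\in\Delta_X\times\Rs_\Os$. For each fixed $(A,B)$, the collection $\sqcup_{p\in X}(\Fs_p)_{AB}$ is a Hermitian line bundle over $X$ (rank one because each $\Fs_p$ is one-dimensional), and these assemble into a unital rank-one Fell bundle over $\Delta_X\times\Rs_\Os$, i.e.~a spaceoid. At the morphism level, a spaceoid morphism $(f,\F)$ decomposes into $f_\Delta$, $f_\Gs$, and a fiberwise $*$-functor $\F$; restriction of $\F$ to each $\Fs_p$ yields a morphism of one-dimensional C*-categories covering $f_\Gs$, which is precisely a morphism in the bicategory $\Mf_{\text{C*-cat-}\Of}$. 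Functoriality and mutual invertibility of the two constructions would then follow by direct inspection, just as in the preceding theorem.

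The main obstacle, as in that preceding theorem, is topological: the fibers of $\Fs\to X$ are disjoint unions of Banach spaces rather than single Banach spaces, so I would need to invoke the extended version of Definition~\ref{def: banach} discussed in Remark~\ref{rem: banach} and verify carefully that the topology on $\Fs$ inherited by restriction from $\Es$ coincides with the topology reconstructed from the enrichment data. A secondary but nontrivial subtlety will be ensuring that the continuous coordinate $f_\Delta$ and the discrete coordinate $f_\Gs$ of a spaceoid morphism combine coherently into a single morphism of the enriched Fell bundle, and that this combination is inverse to the decomposition performed in the forward direction.
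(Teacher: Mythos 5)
Your proposal follows essentially the same route as the paper's own proof: slicing the spaceoid over each $p\in\Delta_X$ to obtain a one-dimensional C*-category $\Es_p$ with object set $\Os$, observing that the absence of non-trivial composable arrows in $\Delta_X$ makes the enrichment data $\mu,j,\nu$ automatic, reassembling by disjoint union in the reverse direction, and splitting morphisms into $f_\Delta$ and the fiberwise restrictions. If anything, you are more explicit than the paper about the topological point (invoking Remark~\ref{rem: banach} for fibers that are disjoint unions of Banach spaces), which the paper only gestures at with ``the topology of the total space assures\dots''; no gap to report.
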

\begin{proof}
Given a spaceoid $(\Es,\pi,\Delta_X\times\Rs_\Os)$, we know that for every $p\in\Delta_X$, $\Es_p$ is a C*-category over the equivalence relation $\Rs_\Os$ and of course each $\Es_p$ is a ``monoidal'' identity in $\Mf_{\text{C*-cat-}\Of}$. Since there are no non-trivial composable arrows on the base space $\Delta_X$, the requirements on the canonical isomorphism $\mu,j,\nu$ are clearly satisfied. The topology of the total space $\Es$ assures that the bundle of C*-categories $p\mapsto \Es_p$ is a trivial Fell bundle enriched in $\Mf_{\text{C*-cat-}\Of}$.
For every morphism of spaceoids $(f,\F)$, the pair $(f_\Delta, \H)$, where $\H$ is the family of maps $p\mapsto \H_p$ with 
$\H_p:=\F|_{\Es_p}$, is a morphisms in $\Mf_{\text{C*-cat-}\Of}$. The map $\Tg':\Tf_\Af\to\Mf_{\text{C*-cat-}\Of}$ that we have just described is functorial. 

\medskip 

In the revese direction, given a Fell bundle $(\Fs,\rho,\Delta_X)$ enriched in $\Mf_{\text{C*-cat-}\Of}$, we have that $\Fs_p$ is a monoidal identity in $\Mf_{\text{C*-cat-}\Of}$, for all $p\in \Delta_X$, and hence $\Fs_p$ is a one-dimensional \hbox{C*-category} over the equivalence relation $\Rs_\Os$, with family of objects $\Os$ fixed. The disjoint union of such \hbox{C*-categories} defines a bundle over 
$\Delta_X\times\Rs_\Os$ that is a categorical $*$-bundle. From the requirements on the topology of $\Fs$ we have that the space 
$\Es:=\Fs$ is a Banach bundle over $\Delta_X\times\Rs_\Os$ and hence a spaceoid. 
For every morphism $(h,\H)$ in $\Ff^{\Mf_{\text{C*-cat-}\Of}}_\Kf$, we define $f_\Delta:=h$ that is already a morphism in $\Kf$. 
Furthermore since $\H=(\psi,\Psi)$, with $\psi:\Rs_{\Os_1}\to\psi_{\Os_2}$ a fixed isomorphism of equivalence relations, is such that 
$(\psi,\Psi_p)$, for all $p\in \Delta_{X_1}$, is a morphisms in $\Mf_{\text{C*-cat-}\Of}$, we take $f_\Rs:=\psi$ and we define $\Fs$ to be the disjoint union of all such $\Psi_p$. The pair $(f,\F)$ with $f:=(f_\Delta,f_\Rs)$ is a morphism in the category $\Tf_\Af$. 
The map $\Sg:\Ff^{\Mf_{\text{C*-cat-}\Of}}_\Kf\to\Tf_\Af$ that we just described is funtorial and is an inverse of the functor 
$\Fg:\Ff^{\Mf_{\text{C*-cat-}\Of}}_\Kf\to\Tf_\Af$. 
\end{proof}

\section{Outlook}\label{sec: outlook}

We introduced a variation of the notion of weak monoidal $*$-category, we used it in order to define enriched Fell bundles and we further applied it to the formalization of several equivalent descriptions of the spectral data for commutative full small 
C*-categories. 

It is likely that the notion of spaceoid will undergo some modifications 
in order to encompass spectral theories for more general situations such as commutative saturated Fell bundles over groupoids or more generally inverse involutive categories, but monoidal $*$-categories and enriched Fell bundles will continue to play a role in this more general contexts. 

It will be of interest in the future to consider the vertical categorification extension of these notions in order to give alternative descriptions of the ``higher spaceoids'' that we are using to describe the spectra of commutative full small 
$n$-C*-categories~\cite{BCLS} (see also the slides~\cite{B2} for some more details on this work in progress). 
We plan to return to this topic with more details elsewhere. 

\emph{Acknowledgments.} 

\smallskip
 
{\small
P.~Bertozzini acknowledges the kind support in Kyoto provided by Professors T.~Natsume, Y.~Maeda and H.~Moriyoshi during the 
RIMS International Conference on Noncommutative Geometry and Physics in November 2010. 
}

\end{document}